\theoremstyle{plain}
\newtheorem{thm}{Theorem}[section]
\newtheorem{lem}{Lemma}[section]
\theoremstyle{definition}
\theoremstyle{definition}
\newtheorem{rmk}{Remark}[section]
\numberwithin{equation}{section}
\newcommand{\Z}{\mathbb{Z}}
\newcommand{\R}{\mathbb{R}}
\newcommand{\Sph}{\mathbb{S}}
\newcommand{\pa}{\partial}
\newcommand{\eps}{\varepsilon}
\newcommand{\supp}{{\rm supp\,}}
\newcommand{\jb}[1]{\langle #1 \rangle}
\newcommand{\dis}{\displaystyle}
\begin{document}
\title{
Energy decay for small solutions to semilinear wave equations 
with weakly dissipative structure
 }

\author{
          Yoshinori Nishii\thanks{
              Department of Mathematics, Graduate School of Science, 
              Osaka University. 
              1-1 Machikaneyama-cho, Toyonaka, 
              Osaka 560-0043, Japan. 
              (E-mail: {\tt y-nishii@cr.math.sci.osaka-u.ac.jp}) 
            }
           \and  
          Hideaki Sunagawa \thanks{
         Department of Mathematics, Graduate School of Science, 
              Osaka University. 
              1-1 Machikaneyama-cho, Toyonaka, 
              Osaka 560-0043, Japan. 
               (E-mail: {\tt sunagawa@math.sci.osaka-u.ac.jp})
             }
           \and  
          Hiroki Terashita \thanks{
              Department of Mathematics, Graduate School of Science, 
              Osaka University. 
              1-1 Machikaneyama-cho, Toyonaka, 
              Osaka 560-0043, Japan. 
              (E-mail: {\tt u233599e@ecs.osaka-u.ac.jp}) 
             }
}

\date{\today }   
\maketitle

\vspace{-4mm}
\begin{center}
{\em Dedicated to the memory of Professor K\^oji Kubota}
\end{center}
\vspace{2mm}

\noindent{\bf Abstract:}\ 
This article gives  an energy decay result for small data solutions to 
a class of semilinear wave equations in two space dimensions 
possessing weakly dissipative structure relevant to 
the Agemi condition.
\\

\noindent{\bf Key Words:}\ 
Semilinear wave equations, energy decay, weakly dissipative structure.
\\

\noindent{\bf 2010 Mathematics Subject Classification:}\ 
35L71, 35B40.
\\


\section{Introduction and the result}  \label{sec_intro}
This article deals with large-time behavior of the solution $u=u(t,x)$ to 
the Cauchy problem 
\begin{align}
& \Box  u= F(\pa u), && (t,x) \in (0, \infty) \times \R^2,
\label{Equ}\\
& u(0,x)=\eps f(x), \ \pa_t u(0,x)=\eps g(x), && x\in \R^2,
 \label{Data}
\end{align}
where  $\Box=\pa_t^2-\Delta=\pa_t^2-(\pa_1^2+\pa_2^2)$ 
and $\pa u=(\pa_0 u, \pa_1 u, \pa_2 u)$
with 
$\pa_0=\pa_t=\pa/\pa t$, $\pa_{1}=\pa/ \pa x_1$, $\pa_{2}=\pa/ \pa x_2$ 
for $t\ge 0$, $x=(x_1,x_2)\in \R^2$.
We suppose that $f$, $g:\R^{2}\to \R$ are compactly-supported 
$C^{\infty}$-functions,  
$\eps$ is a small positive parameter, and $F:\R^{1+2}\to \R$ 
is a smooth function vanishing of quadratic order in a neighborhood of 
$0\in \R^{1+2}$. More explicitly, we suppose that 
\[
 F(\pa u)=F_{\rm q}(\pa u)+F_{\rm c}(\pa u)+O(|\pa u|^4)
\]
near $\pa u=0$, where the quadratic homogeneous part 
$F_{\rm q}(\pa u)$ and the cubic homogeneous part $F_{\rm c}(\pa u)$ 
are given by
\begin{align}
 F_{\rm q}(\pa u)
 = 
 \sum_{j,k=0}^2 B_{jk} (\pa_j u)(\pa_k u),
\quad
 F_{\rm c}(\pa u)
 = 
 \sum_{j,k,l=0}^{2}  C_{jkl} (\pa_j u)(\pa_k u)(\pa_l u)
\label{expression_nonli}
\end{align}
with some real constants $B_{jk}$ and $C_{jkl}$, respectively. 

Before going into the details, let us summarize some of the known results 
briefly to make our motivation clear. 
From the viewpoint of nonlinear perturbation, two-dimensional case is  of 
special interest for the wave equations 
because the essential nonlinear effects can be observed in the large-time 
behavior of the solutions even if the initial data are sufficiently small. 
If both $F_{\rm q}$ and $F_{\rm c}$ are absent and $\eps$ is suitably small, 
we can show global existence of a unique $C^{\infty}$-solution for 
\eqref{Equ}--\eqref{Data} by the standard way 
(see e.g., Chapter~6 in \cite{Kat2}), 
however, we cannot expect the same conclusion for general $F_{\rm q}$ and 
$F_{\rm c}$. In other words, we need some structural conditions for the 
quadratic and cubic parts of $F$ if we expect the small data global existence 
for \eqref{Equ}--\eqref{Data}.  
One of the most famous structural conditions may be the so-called null 
conditions, which we explain now. 
In what follows $\Sph^1$ stands for the unit circle in $\R^2$, and 
we write $\hat{\omega}=(\omega_0,\omega_1,\omega_2)$ for 
$\omega=(\omega_1,\omega_2) \in \Sph^1$ with the convention $\omega_0=-1$.
We say that the quadratic (resp. cubic) null condition is satisfied 
if  $F_{\rm q}(\hat{\omega})$ (resp.  $F_{\rm c}(\hat{\omega})$) vanishes 
identically on $\Sph^1$. Remember that
\begin{align*}
 F_{\rm q}(\hat{\omega})
 =  \sum_{j,k=0}^2 B_{jk}\omega_j\omega_k,
 \quad
 F_{\rm c}(\hat{\omega})
 =  \sum_{j,k,l=0}^{2}  C_{jkl}  \omega_j \omega_k \omega_l
\end{align*}
with the coefficients $B_{jk}$ and $C_{jkl}$ appearing in 
\eqref{expression_nonli}.
According to the classical result by Godin~\cite{Go} 
(see also \cite{Ali1a}, \cite{Hos1}, \cite{Kat1}, \cite{Zha}, etc.), 
the Cauchy problem \eqref{Equ}--\eqref{Data} admits a unique global 
$C^{\infty}$-solution for suitably small $\eps$ if both the quadratic and 
cubic null conditions are satisfied. 
Note that the (quadratic) null condition is originally introduced by 
Klainerman~\cite{Kla} and Christodoulou~\cite{Chr} in three-dimensional, 
quadratic quasilinear case. 
In 3D case, we do not need structural restrictions on the cubic part, 
while in 2D case, what we can expect under the quadratic null condition only 
(without the cubic one) is the lower estimate for the lifespan $T_{\eps}$ 
in the form $T_{\eps}\ge \exp(c/\eps^2)$ with some $c>0$, 
and this is best possible in general (see \cite{Go} for an example of 
small data blow-up). 
According to \cite{Go}, it holds under the quadratic null condition that
\[
 \liminf_{\eps\to +0} \eps^2\log T_{\eps} 
 \ge 
 \frac{1}{\dis{\sup_{(\rho,\omega) \in \R\times \Sph^1} \left[ 
   -F_{\rm c}(\hat{\omega}) (\mathcal{F}_0{[f,g]}(\rho, \omega))^2
  \right]}}
\]
with the convention $1/0=+\infty$, where 
$\mathcal{F}_0{[f,g]}(\rho, \omega)$ is the radiation field 
associated with $f$, $g$ (see Section~3.5 in \cite{Kat2} or 
Section~6.2 in \cite{Hor_b} for its definition). 
More information on the detailed lifespan estimates and the related topics 
can be found in 
\cite{Ali_book}, \cite{Ali1b}, \cite{De}, \cite{Hor}, 
\cite{Hor_b}, \cite{Hos2}, \cite{J}, \cite{J_b},  \cite{Kat2}, 
\cite{SagSu}, \cite{Su1}, etc., and the references cited therein.

Recently, a lot of efforts have been made for finding weaker structural 
conditions than the null conditions which ensure the small data global 
existence (see e.g., \cite{Ali1a}, \cite{Ali2}, 
\cite{HidYok}, \cite{HidZha}, \cite{Hos3}, 
\cite{Kat2}, \cite{Kat3}, \cite{KK}, 
\cite{KMatoS}, \cite{KMatsS}, \cite{KMuS}, \cite{Kub}, \cite{Lind1}, 
\cite{Lind2}, \cite{LR1}, \cite{LR2}, 
\cite{NS} and so on). 
The Agemi condition, which we are interested in, is one of them. 
From now on we always assume that the quadratic null condition is 
satisfied, and we put $P(\omega)=F_{\rm c}(\hat{\omega})$ for 
$\omega\in \Sph^1$ to focus our attentions on contributions from the 
cubic part of $F$. 
We say that the Agemi condition is satisfied if 
\begin{align}
P(\omega)\ge 0, \quad \omega \in \Sph^1.
\tag{{\bf A}}
\end{align}
It has been shown by Hoshiga~\cite{Hos3} and Kubo~\cite{Kub} that 
({\bf A}) implies the small data global existence to 
\eqref{Equ}--\eqref{Data} for the positive time direction 
(see also \cite{Su2} and \cite{HNS} for  closely related results for 
the Klein-Gordon and Schr\"odinger equations). 
Moreover, if the inequality in ({\bf A}) is strict, i.e., 
\begin{align}
P(\omega)> 0, \quad \omega \in \Sph^1,
\tag{${\bf A}_+$}
\end{align}
then we have 
\[
\|u(t)\|_{E}=O((\log t)^{-1/4 +\delta})
\] 
as $t \to +\infty$ with arbitrarily small $\delta>0$, where the energy norm 
$\|\phi(t)\|_{E}$ for a function $\phi(t,x)$ is defined by
\[
 \|\phi(t)\|_{E}^2
=\frac{1}{2} \int_{\R^2} |\pa \phi(t,x)|^2\, dx
\]
(see \cite{KMatsS}, \cite{KMuS}). 
It is obvious that ({\bf A}) is weaker than the cubic null condition, and 
a typical example of $F_{\rm c}(\pa u)$ satisfying (${\bf A}_+$) is the cubic 
damping term $-(\pa_t u)^3$. 
Note also that the energy decay of this kind never occur 
under the cubic null condition unless $f=g\equiv 0$ 
(see e.g., Chapter~9 in \cite{Kat2} for the detail). 
Therefore it will be fair to say that (${\bf A}_+$) yields
dissipative structure. 

Now we come to a  question naturally: 
{\em Does the energy decay occur when $({\bf A})$ is satisfied but 
the cubic null condition and $({\bf A}_+)$ are violated?} 
To the authors' knowledge, there are no previous works which address 
this question except \cite{NS} (see Remark~\ref{rmk_system} below), 
and it is far from trivial because the expected dissipation 
is weaker. 
The aim of this article is to give an affirmative answer to this question 
in the single case. 
The main result is as follows.

\begin{thm}  \label{Thm_main}
Assume that quadratic null condition and $({\bf A})$ are satisfied 
but the cubic  null condition is violated. 
For the global solution $u$ 
to \eqref{Equ}--\eqref{Data}, there exist positive constants $C$ and 
$\lambda$ such that 
\[
\|u(t)\|_E\le \frac{C\eps}{(1+ \eps^2\log (t+2))^{\lambda}}
\]
for $t\ge 0$, provided that $\eps$ is sufficiently small. 
\end{thm}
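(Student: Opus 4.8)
The plan is to derive an energy decay estimate by analyzing the radiation field (profile) along the null directions, where the cubic nonlinearity acts as a weak dissipation governed by $P(\omega) = F_{\rm c}(\hat\omega) \ge 0$. Under the quadratic null condition, the standard theory (Hörmander's asymptotic system / Godin's reduction) tells us that the leading-order behavior of $u$ near the light cone is captured by a profile $U(s,\rho,\omega)$ solving a reduced ODE in the slow time $s \sim \log t$, in which the cubic term contributes a dissipative factor proportional to $-P(\omega)(\pa_\rho U)^2 \pa_\rho U$. The energy norm $\|u(t)\|_E^2$ is, to leading order, comparable to a weighted $L^2$-type quantity of $\pa_\rho U$ integrated over $(\rho,\omega)$, so the task reduces to tracking the time-decay of this profile energy.

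First I would set up the reduced/asymptotic equation for the profile precisely, identifying the slow time variable $s = \eps^2 \log(t+2)$ (or a comparable logarithmic scale consistent with the statement's $\eps^2 \log(t+2)$), and establish that the energy $\|u(t)\|_E$ is equivalent, up to lower-order corrections controllable for small $\eps$, to a profile energy $\mathcal{E}(s) = \int_{\Sph^1}\int_\R |\pa_\rho U(s,\rho,\omega)|^2\, d\rho\, d\omega$. Next I would differentiate $\mathcal{E}(s)$ along the flow and use the structure of the reduced equation to obtain a differential inequality of the form $\frac{d}{ds}\mathcal{E}(s) \le -c\, \mathcal{D}(s)$, where $\mathcal{D}(s)$ is a dissipation functional built from $\int \int P(\omega)\,|\pa_\rho U|^4\, d\rho\, d\omega$. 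The decisive point is to convert this dissipation rate into an algebraic decay for $\mathcal{E}(s)$ itself; this is where the hypothesis that the cubic null condition is \emph{violated} (so $P(\omega)$ is not identically zero, hence $P>0$ on some open arc) together with (\textbf{A}) ($P \ge 0$) must be exploited to ensure the dissipation genuinely bites.

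The hard part will be obtaining a lower bound for the dissipation $\mathcal{D}(s)$ in terms of $\mathcal{E}(s)$ of the form $\mathcal{D}(s) \ge \kappa\, \mathcal{E}(s)^{2}$ (or a comparable superlinear bound), despite $P(\omega)$ vanishing on part of $\Sph^1$. Because $P$ may be zero on a nontrivial arc, energy can in principle escape into the "null directions" where no damping acts, so one cannot naively use a pointwise Poincaré-type inequality; I would instead need to control the transport/coupling in $\omega$ and $\rho$ to show that the mass concentrated in the undamped directions is itself of lower order, or is continually fed into the damped region. Once a Riccati-type inequality $\mathcal{E}'(s) \le -\kappa \mathcal{E}(s)^2$ is in hand with initial datum $\mathcal{E}(0) = O(\eps^2)$, integrating yields $\mathcal{E}(s) \lesssim \eps^2/(1+\kappa\eps^2 s)$, and translating back through $s = \eps^2\log(t+2)$ and $\|u(t)\|_E \sim \mathcal{E}(s)^{1/2}$ gives precisely the claimed bound $C\eps\,(1+\eps^2\log(t+2))^{-\lambda}$ with $\lambda = 1/2$, or a smaller $\lambda$ if the superlinear exponent in the dissipation estimate is weaker than quadratic.

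Finally, I would need to justify rigorously that the actual solution $u$ stays close to its profile approximation uniformly in $t$, so that the profile energy decay transfers to genuine energy decay; this requires the a priori global existence already guaranteed by (\textbf{A}) via Hoshiga and Kubo, together with error estimates (ghost weights or the standard $L^\infty$--$L^\infty$ and $L^2$ energy machinery adapted to the weakly dissipative setting) showing the remainder contributes at most lower order. I expect the remainder control and the lower bound on $\mathcal{D}(s)$ over the partially-damped sphere to be the two technical bottlenecks, with the latter being the genuinely novel obstacle that distinguishes this weakly dissipative case from the strictly dissipative one treated under (${\bf A}_+$).
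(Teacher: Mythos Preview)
Your proposal has a genuine gap at precisely the step you flag as the bottleneck: the inequality $\mathcal{D}(s)\ge \kappa\,\mathcal{E}(s)^2$ cannot hold, and the suggested repair is unavailable. The reduced (profile) equation is \emph{decoupled} in $\omega$ --- there is no transport or coupling between different angular directions --- so energy initially localized near a zero of $P(\omega)$ simply stays there, forcing $\mathcal{D}(s)$ to be arbitrarily small while $\mathcal{E}(s)$ remains of order one. Your proposed workaround (``control the transport/coupling in $\omega$ and $\rho$'') therefore has nothing to act on, and without it the Riccati step does not close.

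You also misread the structure of the zero set of $P$. You write that ``$P$ may be zero on a nontrivial arc'', but this is impossible here: since $F_{\rm c}$ is a homogeneous cubic polynomial, $\Psi(\theta)=P(\cos\theta,\sin\theta)$ is a trigonometric polynomial. If it is non-negative and not identically zero (exactly the hypotheses of the theorem), its zeros on $[0,2\pi]$ are finitely many points $\theta_1,\dots,\theta_m$, each of even finite vanishing order $2\nu_j$. This algebraic fact is the actual engine of the proof, and your outline does not use it.

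The paper exploits it as follows. One runs the profile ODE \emph{pointwise} in $(\sigma,\omega)$: from $\pa_t V=-\tfrac{P(\omega)}{2t}V^3+G$, a Riccati-type lemma applied to $\Phi=P(\omega)V^2$ yields $|V(t;\sigma,\omega)|\le C/\sqrt{P(\omega)\log t}$. This bound degenerates as $P(\omega)\to 0$, but one also keeps the uniform estimate $|\pa u|\le C\eps\,t^{-1/2}\jb{t-r}^{-(1-\mu)}$. Interpolating between the two pointwise bounds and integrating the energy in polar coordinates, the angular integral becomes $\int_0^{2\pi}P(\cos\theta,\sin\theta)^{-2\lambda}\,d\theta$, which converges for any $\lambda<1/(4\nu)$ with $\nu=\max_j\nu_j$, precisely because each zero of $\Psi$ has finite order. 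This is what produces the exponent $\lambda$ and explains its dependence on the vanishing order of $P$; an integrated energy inequality of the type you propose, even if it could be salvaged, would not see this mechanism.
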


\begin{rmk}\label{rmk_ex1}
The cubic nonlinear terms below are examples of $F_{\rm c}(\pa u)$ 
which satisfy ({\bf A}) but fail to satisfy the cubic null condition 
and (${\bf A}_+$): 
\begin{align*}
-(\pa_{1} u)^2\pa_t u, \qquad 
-(\pa_{1}u)^2(\pa_{t}u+\pa_{2}u), \qquad 
-(\pa_t u+\pa_{2} u)^3.
\end{align*}
The corresponding $P(\omega)$'s are 
$\omega_1^2$, $\omega_1^2 (1-\omega_2)$, $(1-\omega_2)^3$, 
respectively. We will give more precise estimates of $\lambda$ 
for these three cases in Section~\ref{sec_rem_lambda}.
\end{rmk}

\begin{rmk}
An analogous result for cubic derivative nonlinear Schr\"odinger equations 
in $\R^1$ will be presented in the forthcoming paper \cite{LNSS3}.
\end{rmk}

\begin{rmk}\label{rmk_system}
It may be natural to ask what happens in the system case, since the 
null conditions and the Agemi-type structural conditions can be defined also 
for systems (see \cite{KMatsS}). 
Concerning this question, two of the authors have pointed out in \cite{NS} 
that the situation is much more delicate than the single case 
by considering the two-component system
\begin{align*}
\left\{\begin{array}{l}
 \Box u_1=- (\pa_t u_2)^2 \pa_t u_1,\\
 \Box u_2=- (\pa_t u_1)^2 \pa_t u_2.
 \end{array}\right.
\end{align*}
See \cite{NS} for the details, and also \cite{LNSS1}, \cite{LNSS2} for 
closely related results on a system of nonlinear Schr\"odinger equations. 
It seems that there are many interesting problems to be studied 
in systems with weakly dissipative structure. 
\\
\end{rmk}

We close the introduction with the contents of this article. 
In the next section we introduce a detailed pointwise estimate for the 
solution to \eqref{Equ}--\eqref{Data} under the quadratic null condition and 
({\bf A}). 
Section~\ref{sec_key} is devoted to two important lemmas which play key roles 
in our analysis. After that, Theorem \ref{Thm_main} will be proved in Section
\ref{sec_proof}, and some remarks on the decay rates will be given in 
Section~\ref{sec_rem_lambda}. 
Finally, in Section~\ref{sec_appendix}, we will give an outline of the proof 
of Lemma~\ref{lem_pointwise}.

\section{A detailed pointwise estimate under ({\bf A})}  
\label{sec_sharp_est}
In this section we introduce a detailed pointwise estimate for the solution 
to \eqref{Equ}--\eqref{Data} under the quadratic null condition and ({\bf A}). 
In what follows, we write $\jb{z} =\sqrt{1+|z|^{2}}$ for $z\in \R^d$.
\begin{lem} \label{lem_pointwise}
Let $0<\mu <1/10$. Assume that the quadratic null condition and $({\bf A})$ 
are satisfied. If $\eps$ is suitably small, there exists a positive constant 
$C$, not depending on $\eps$, such that the solution $u$ to 
\eqref{Equ}--\eqref{Data} satisfies 
\begin{align}
\label{Decay}
 |\pa u(t,r\omega)|
 \le 
 \frac{C\eps}{\sqrt{t}} 
 \min \left\{ 
 \frac{1}{\sqrt{P(\omega)\eps^2\log t}}, \frac{1}{\jb{t-r}^{1-\mu}} \right\}
\end{align}
for $(t, r,\omega)\in [2,\infty)\times [0,\infty)\times \Sph^1$. 
\end{lem}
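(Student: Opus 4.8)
The plan is to derive the pointwise estimate \eqref{Decay} by reducing the PDE \eqref{Equ} to an ODE along the characteristic rays, following the standard Hörmander-type analysis for two-dimensional wave equations, adapted to the dissipative structure coming from $({\bf A})$. First I would introduce hyperbolic coordinates and the profile function: set $U(t,r,\omega) \approx \sqrt{r}\,(\pa_t - \pa_r) u(t,r\omega)$, so that the leading-order behavior of $\pa u$ along the outgoing light cone $\{r \approx t\}$ is governed by $U$ evaluated at the retarded variable $\rho = r - t$. Under the quadratic null condition the quadratic nonlinearity $F_{\mathrm q}(\pa u)$ does not contribute to the leading asymptotic system (its symbol vanishes on the characteristic variety), so after multiplying by the weight $\sqrt{r}$ and extracting the oscillating/transport structure, the reduced equation takes the Hörmander normal form
\[
 \pa_s V(s,\rho,\omega) = \frac{1}{s}\,p(\omega)\,V(s,\rho,\omega)^2\,\pa_\rho V(s,\rho,\omega)
 + (\text{remainder}),
\]
where $s = \log t$ is the slow time, $V$ is the rescaled profile, and $p(\omega)$ encodes $P(\omega) = F_{\mathrm c}(\hat\omega)$. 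The key point is that $({\bf A})$ makes the cubic interaction sign-definite, so the associated ODE in $s$ is genuinely dissipative.

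Next I would analyze this reduced ODE. The crucial observation is that for fixed $\rho,\omega$ the quantity $A(s) = (\pa_\rho V)^2$ or, more precisely, an appropriate energy-type quantity built from $V$ satisfies a differential inequality of Riccati type. Concretely, differentiating along the flow and using $P(\omega)\ge 0$ one obtains $\tfrac{d}{ds}\,w \le -\tfrac{c}{s}\,P(\omega)\,w^2$ for a nonnegative quantity $w(s)$ comparable to $|\pa u|^2 \cdot t$. Integrating this Riccati inequality from the initial slow-time $s_0 = \log 2$ yields
\[
 w(s) \le \frac{w(s_0)}{1 + c\,P(\omega)\,w(s_0)\,(\log s - \log s_0)}
 \lesssim \frac{1}{P(\omega)\,\log s},
\]
which after undoing the rescaling gives precisely the first branch $\tfrac{1}{\sqrt{P(\omega)\eps^2\log t}}$ of the minimum in \eqref{Decay}. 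The second branch, $\tfrac{1}{\jb{t-r}^{1-\mu}}$, is the \emph{a priori} decay already available from the smallness of $\eps$ and standard weighted energy / Klainerman-inequality estimates that hold under the quadratic null condition alone (with the loss $\mu$ absorbing the logarithmic slow growth of the generalized energy), independently of the dissipation; taking the minimum of the two is legitimate because both bounds hold simultaneously.

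The main obstacle I anticipate is justifying the passage from the exact solution $u$ to the reduced profile ODE with error terms that are genuinely negligible. One must control: (i) the remainder from the quadratic null form, which vanishes to leading order but contributes lower-order terms that could, a priori, overwhelm the weak $\tfrac{1}{\log s}$ dissipation; (ii) the quartic and higher remainder $O(|\pa u|^4)$; and (iii) the difference between $\pa u$ off the light cone and its approximation by the profile, i.e.\ the region $|t-r|$ large. Each of these must be shown to be integrable in $s$ or to decay faster than the dissipative main term, which is delicate precisely because the dissipation here is only logarithmic rather than polynomial. The cleanest route is a bootstrap argument: assume \eqref{Decay} with a slightly larger constant on a maximal interval, feed it into the error estimates to show the remainders are controlled by the assumed bound, and close the loop. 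Because the full justification of these reductions is lengthy and technical, I would state \eqref{Decay} as Lemma~\ref{lem_pointwise} here and defer the detailed construction of the profile and the error analysis to Section~\ref{sec_appendix}, as the authors indicate they will do.
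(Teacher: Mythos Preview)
Your overall strategy---reduce to a profile along outgoing rays and exploit $({\bf A})$ through an ODE inequality---matches the paper, but two concrete errors would derail the computation. First, the reduced equation is not of Burgers type. Because \eqref{Equ} is \emph{semilinear} in $\pa u$, the profile $U=\mathcal{D}\bigl(|x|^{1/2}u\bigr)$ with $\mathcal{D}=\tfrac12(\pa_r-\pa_t)$ satisfies along the outgoing direction
\[
(\pa_t+\pa_r)U=-\frac{P(\omega)}{2t}\,U^{3}+H,
\]
a genuine cubic-damped ODE for each fixed $(\sigma,\omega)$ with no $\pa_\rho V$ term; the transport form $V^{2}\pa_\rho V$ you wrote arises in the quasilinear setting and carries no dissipation by itself. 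Second, your slow-time bookkeeping gives the wrong rate: with $s=\log t$ the factor $1/t$ becomes $1$ (since $dt=t\,ds$), so the correct Riccati inequality is $\pa_s w\le -cP(\omega)w^{2}+\cdots$, yielding $w\lesssim 1/(P(\omega)\log t)$; your version with the extra $1/s$ would give only $1/(P(\omega)\log\log t)$. The paper works in $t$ directly, sets $\Phi=P(\omega)V^{2}$ so that $\pa_t\Phi\le -\tfrac{1}{t}\Phi^{2}+C_*\eps^{2} t^{-q}$ with coefficient $1$ (not $P(\omega)$) on the damping, and applies a Matsumura-type lemma to obtain $\Phi\le M/\log t$ with $M$ independent of $\omega$; this uniformity near the zeros of $P$ is exactly what the application requires.

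On the remainder: the paper does not bootstrap on \eqref{Decay}. The a~priori bounds $|\pa u|\le C\eps\jb{t+|x|}^{-1/2}\jb{t-|x|}^{\mu-1}$ and $|u|_{2}\le C\eps\jb{t+|x|}^{-1/2+\mu}$ are imported from earlier work (they hold under the quadratic null condition and $({\bf A})$ alone), and these already force $|H|\le C\eps\, t^{2\mu-3/2}\jb{\sigma}^{-\mu-1/2}$, which feeds into the Matsumura lemma with $q=3/2-2\mu>1$. Thus the ODE step closes without circularity, and the second branch $\jb{t-r}^{-(1-\mu)}$ in \eqref{Decay} is simply the imported a~priori estimate, as you correctly anticipated.
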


The estimate \eqref{Decay} is non-trivial, 
but the ideas and tools needed in the proof are essentially not new.
So we shall put the proof of Lemma~\ref{lem_pointwise} off 
until the final section, 
and we are getting into the proof of Theorem~\ref{Thm_main} 
by using \eqref{Decay} first.

\section{Key lemmas} \label{sec_key}
This section is devoted to two important lemmas which play key roles 
in our analysis. 
Throughout this section, we suppose that $\Psi(\theta)$ is a real-valued 
function on $[0,2\pi]$ which can be 
written as a (finite) linear combination of the terms 
$\cos^{p_1} \theta \sin^{p_2} \theta$ with $p_1$, $p_2\in \Z_{\ge 0}$. 

\begin{lem} \label{lem_key1} 
If $\Psi(\theta)\ge 0$ for all $\theta\in [0,2\pi]$, 
then we have either of the following three assertions:
\begin{itemize}
\item[$({\bf a})$] 
$\Psi(\theta)=0$ for all $\theta\in [0,2\pi]$.
\item[$({\bf b})$] 
$\Psi(\theta)>0$ for all $\theta\in [0,2\pi]$.
\item[$({\bf c})$]
There exist positive integers $m$, $\nu_1, \ldots, \nu_m$, 
points $\theta_1,\ldots, \theta_m \in [0,2\pi]$, 
and 
positive constants $c_1,\ldots, c_m$ such that 
\begin{itemize}
\item[$\bullet$]
$\Psi(\theta)>0$ for 
$\theta\in [0,2\pi]\backslash \{\theta_1,\ldots, \theta_m \}$, 
\item[$\bullet$]
$\Psi(\theta)=(\theta-\theta_j)^{2\nu_j}\bigl(c_j+o(1)\bigr)$
as $\theta\to \theta_j$
for each $j=1,\ldots, m$.
\end{itemize}
\end{itemize}
\end{lem}
\begin{proof}
Let $\mathcal{N}$ be the set of zeros of $\Psi$ on $[0,2\pi]$. 
It is easy to see that the case $\mathcal{N}=\emptyset$ corresponds to 
the case ({\bf b}) in the statement. 
Next we consider the case of $\sharp \mathcal{N}=\infty$. 
It follows from the Bolzano-Weierstrass theorem that $\mathcal{N}$ has an 
accumulation point. This is impossible unless $\Psi$ vanishes identically on 
$[0,2\pi]$ since $\Psi$ is a trigonometric polynomial. Therefore
we have ({\bf a}). 
What remains is the case where $0<\sharp\mathcal{N}<\infty$. 
In this case we can write 
$\mathcal{N}$ as $\{\theta_1,\ldots, \theta_m\}$ with $m=\sharp\mathcal{N}$. 
Note that $\Psi(\theta)>0$ for $\theta\in [0,2\pi]\backslash \mathcal{N}$. 
Now let us focus on local behavior of $\Psi(\theta)$ near the point 
$\theta_j$. We observe that we can take $\kappa_j\in \Z_{>0}$ such that
$\Psi^{(l)}(\theta_j)=0$ for $l\le \kappa_j-1$ and 
$\Psi^{(\kappa_j)}(\theta_j)\ne 0$. 
By the Taylor expansion, we have 
\begin{align*}
\Psi(\theta)
=
\sum_{l\le \kappa_j} \frac{\Psi^{(l)}(\theta_j)}{l!}(\theta-\theta_j)^{l}
+O((\theta-\theta_j)^{\kappa_j+1})
=
(\theta-\theta_j)^{\kappa_j}
\left(\frac{\Psi^{(\kappa_j)}(\theta_j)}{\kappa_j!}
+o(1)\right)
\end{align*}
as $\theta\to \theta_j$. By the assumption that $\Psi$ is non-negative, we 
see that $\kappa_j$ must be an even integer and 
$\Psi^{(\kappa_j)}(\theta_j)$ must be strictly positive. 
Therefore we arrive at the case ({\bf c}) by setting 
$c_j=\Psi^{(\kappa_j)}(\theta_j)/(\kappa_j!)$ and $\nu_j=\kappa_j/2$.
 \end{proof}

\begin{lem}  \label{lem_key2}
Assume that $\Psi$ satisfies $({\bf c})$. 
We set $\nu=\max\{\nu_1,\ldots, \nu_m\}$. 
Then, for $0<\gamma <1/(2\nu)$, we have 
\[
 \int_0^{2\pi} \frac{d\theta}{\Psi(\theta)^{\gamma}}<\infty.
\]
\end{lem}
\begin{proof}
We consider only the case where $\theta_j\ne 0$, $2\pi$ for $j=1,\ldots, m$. 
The other case can be also shown by minor modifications.
We take positive constants $\delta_j$ ($j=1,\ldots, m$) so small that the 
intervals $J_j=(\theta_j-\delta_j, \theta_j+\delta_j)$ 
satisfy
\[
J_j\cap J_k=\emptyset
\quad \mbox{for}\quad 
1\le j< k\le m
\] 
and
\[
\Psi(\theta)\ge \frac{c_j}{2} (\theta-\theta_j)^{2\nu_j} 
\quad \mbox{for}\quad
\theta\in {J_j}.
\] 
We also set 
\[
 K=[0,2\pi]\backslash \bigcup_{j=1}^{m}J_j.
\]
Since $K$ is compact, we can take 
$M> 0$ such that $\Psi(\theta)\ge M$ for $\theta\in K$. So it follows that
\[
 \int_K \frac{d\theta}{\Psi(\theta)^{\gamma}}
 \le 
 \frac{2\pi}{M^{\gamma}}
 <
 \infty.
\]
On the other hand, since $2\gamma\nu_j\le 2\gamma\nu < 1$, we have 
\[
 \int_{J_j} \frac{d\theta}{\Psi(\theta)^{\gamma}}
\le 
\left(\frac{2}{c_j}\right)^{\gamma}
\int_{-\delta_j}^{\delta_j} \frac{d\theta}{|\theta|^{2\gamma\nu_j}}
<\infty 
\]
for $j=1,\ldots, m$. Summing up, we arrive at 
\[
  \int_0^{2\pi} \frac{d\theta}{\Psi(\theta)^{\gamma}}
=
\int_K \frac{d\theta}{\Psi(\theta)^{\gamma}} 
+ 
 \sum_{j=1}^{m}\int_{J_j} \frac{d\theta}{\Psi(\theta)^{\gamma}}
<\infty,
\]
as desired.
\end{proof}

\section{Proof of Theorem~\ref{Thm_main}}  \label{sec_proof}
We are ready to prove Theorem~\ref{Thm_main}. 
In what follows, we denote various positive constants by the same letter $C$ 
which may vary from one line to another. 
Since $f$ and $g$ are compactly-supported, we can 
take $R>0$ such that 
\begin{align}
\supp f \cup \ \supp g \subset \{ x \in \R^2 ; |x| \le R \}.
\label{cpt_supp}
\end{align}
Then, by the finite propagation property, we have
\begin{align}
 \supp u(t,\cdot) \subset \{ x \in \R^2 ; |x| \le t+R \}
\label{finite_propa}
\end{align}
for $t\ge 0$. 

As mentioned in the introduction, we already know that the conclusion is 
true under (${\bf A}_+$). So, in what follows, we assume that the quadratic 
null condition and (${\bf A}$) are satisfied but the cubic null 
condition and (${\bf A}_+$) are violated. 
Then we see that the case ({\bf a}) and ({\bf b}) in Lemma~\ref{lem_key1}  
are excluded by 
$\Psi(\theta)=P(\cos \theta, \sin \theta)$, whence it satisfies ({\bf c}). 
Therefore, by Lemma~\ref{lem_key2}, there exists  $0<\lambda<1/4$ such that 
\[
\int_{0}^{2\pi}
\frac{ d\theta}{P(\cos \theta, \sin \theta)^{2\lambda}} 
< \infty.
\]
With this $\lambda$, we choose $\mu$ such that 
\[
0<\mu <\min\left\{\frac{1}{10}, \frac{1-4\lambda}{2-4\lambda}\right\}.
\]
Let $t\ge 2$ from now on. By Lemma~\ref{lem_pointwise}, we have
\begin{align}
 |\pa u(t,r\omega)|
 &\le 
 \frac{C\eps }{\sqrt{t}} 
 \left(\frac{1}{\sqrt{P(\omega)\eps^2 \log t}}\right)^{2\lambda}
 \left(\frac{1}{\jb{t-r}^{1-\mu}} \right)^{1-2\lambda}
 \notag\\
 &=
  \frac{C\eps}{(\eps^2\log t)^{\lambda}} 
  \cdot
  \frac{1}{P(\omega)^{\lambda}}
  \cdot
  \frac{1}{\sqrt{t} \jb{t-r}^{(1-\mu)(1-2\lambda)}}
 \label{Decay_interpol}
\end{align}
for $(t, r,\omega)\in [2,\infty)\times (0,\infty)\times \Sph^1$. 
Next we set $\rho=(\eps^2\log t)^{\frac{2\lambda}{1-2\mu}}$. 
For small $\eps>0$, we have $0<\rho<t$, and thus 
$0<t+R-\rho\le t+R$.
Then we can split 
\begin{align*}
2\|u(t)\|_E^2
&=
\int_{|x|\le t+R-\rho} |\pa u(t,x)|^2 dx
+
\int_{t+R-\rho\le |x|\le t+R}|\pa u(t,x)|^2 dx\\
&=:I_1+I_2.
\end{align*}
We also note that 
\[ 
 r/t\le (t+R)/t\le 1+R/2 \ \ \mbox{for}\ \  
0\le r\le t+R, 
\]
and 
\[
0<\rho\le R+t-r\le \sqrt{2}(1+R)\jb{t-r}\ \  \mbox{for}\ \  
0\le r\le t+R-\rho.
\]
By using the polar coordinates,
we deduce from \eqref{Decay} and \eqref{Decay_interpol} that
\begin{align*}
 I_1 
 &\le  \int_{0}^{2\pi}
 \int_0^{t+R-\rho} \left(\frac{C\eps}{\sqrt{t}\jb{t-r}^{1-\mu}}\right)^2 rdr 
 d\theta
 \\
 &\le  C\eps^2\int_0^{t+R-\rho} \frac{r dr}{t\jb{t-r}^{2-2\mu}} 
 \\
 &\le  
  C\eps^2 \int_0^{t+R-\rho} \frac{dr}{(R+t-r)^{2-2\mu}} \\
 &\le  
  \frac{C\eps^2}{\rho^{1-2\mu}}
\end{align*}
and
\begin{align*}
I_2 
&\le 
\frac{C\eps^2}{(\eps^2\log t)^{2\lambda}}
\left(
\int_{0}^{2\pi}\frac{ d\theta}{P(\cos\theta,\sin \theta)^{2\lambda}}
\right)
\left(
\int_{t+R-\rho}^{t+R}
\frac{ r dr}{t\jb{t-r}^{2(1-\mu)(1-2\lambda)}} 
\right)\\
&\le
\frac{C\eps^2}{(\eps^2\log t)^{2\lambda}}
\int_{\R}
\frac{d\sigma}{\jb{\sigma}^{2(1-\mu)(1-2\lambda)}},
\end{align*}
respectively. Since $2(1-\mu)(1-2\lambda)>1$, we see that the integral 
in the last line converges.
Eventually we have 
\[
\|u(t)\|_E^2
\le 
\frac{C\eps^2}{\rho^{1-2\mu}}+\frac{C\eps^2}{(\eps^2\log t)^{2\lambda}}
\le
\frac{C\eps^2}{(\eps^2\log (t+2))^{2\lambda}}.
\]
Also we obtain 
\[
\|u(t)\|_E^2
\le
C\eps^2 \int_{0}^{t+R} \frac{r dr}{t\jb{t-r}^{2-2\mu}}
\le
C\eps^2 \int_{\R} \frac{d\sigma}{\jb{\sigma}^{2-2\mu}}
\le 
C\eps^2
\]
by \eqref{Decay}. Summing up, we arrive at the desired estimate.
\qed\\

\section{Remarks on the decay rates}  \label{sec_rem_lambda}
It is worthwhile to mention the exponent $\lambda$ appearing in 
Theorem~\ref{Thm_main}. 
In view of the argument in Section~\ref{sec_proof}, 
we can see that $\lambda$ is determined 
by $\nu$ coming from Lemma~\ref{lem_key2}. 
To be more precise, we can take 
$\lambda=1/(4\nu) -\delta$ with arbitrarily small $\delta>0$, 
and $2\nu$ is the maximum of the vanishing order of zeros of 
$\Psi(\theta)=P(\cos \theta,\sin \theta)$. 

Now, let us compute $\nu$ for the examples of $F_{\rm c}(\pa u)$ raised 
in Remark~\ref{rmk_ex1}. 

\begin{itemize}
\item[(1)] 
We first focus on $F_{\rm c}(\pa u)=-(\pa_{1}u)^2\pa_{t}u$. 
Since $\Psi(\theta)=\cos^2\theta$, we can check that
\begin{align*}
 &\Psi(\theta)=(\theta -\pi/2)^2 (1+o(1))
\quad (\theta \to \pi/2),
\\
 &\Psi(\theta)=(\theta -3\pi/2)^2 (1+o(1))
\quad (\theta \to 3\pi/2),
\end{align*}
and $\Psi(\theta)>0$ when $\theta\ne \pi/2$, $3\pi/2$. These tell us 
that $\nu=1$, and thus we have 
$\|u(t)\|_{E}=O((\log t)^{-1/4 +\delta})$ as $t \to \infty$,
where $\delta>0$ can be arbitrarily small.
\item[(2)] 
In the case of $F_{\rm c}(\pa u)=-(\pa_{1}u)^2(\pa_{t}u+\pa_{2}u)$, 
we see that 
$\Psi(\theta)=\cos^2\theta (1-\sin\theta)$, and its zeros are 
$\theta=\pi/2$ and $3\pi/2$. Near these points, we have
\[
 \Psi(\theta)=(\theta -\pi/2)^4 (1/2+o(1))
\quad (\theta \to \pi/2)
\]
and 
\[
 \Psi(\theta)=(\theta -3\pi/2)^2 (2+o(1))
\quad (\theta \to 3\pi/2).
\]
Hence $\nu=\max\{2,1\}=2$, from which it follows that 
$\|u(t)\|_{E}=O((\log t)^{-1/8 +\delta})$ as $t \to \infty$
with arbitrarily small $\delta>0$.
\item[(3)] 
For $F_{\rm c}(\pa u)=-(\pa_t u+\pa_{2} u)^3$, we have 
$\Psi(\theta)=(1-\sin\theta)^3$. This vanishes only when $\theta=\pi/2$, 
and it holds that 
\[
 \Psi(\theta)=(\theta -\pi/2)^6 (1/8+o(1))
\quad (\theta \to \pi/2).
\]
Therefore $\nu=3$, and this implies that 
$\|u(t)\|_{E}$ decays like $O((\log t)^{-1/12 +\delta})$ as 
$t\to \infty$ with $0<\delta \ll 1/12$.
\end{itemize}

\begin{rmk}
It is not certain whether these decay rates are the best or not. 
It may be an interesting problem to specify the optimal rates for the 
energy decay.
\end{rmk}

\section{Proof of Lemma~\ref{lem_pointwise}}  \label{sec_appendix}
What remains is the proof of Lemma~\ref{lem_pointwise}, whose 
outline will be given in this section. 
We note that many parts of the argument below are almost the same as those in 
the previous works \cite{KMatoS}, \cite{KMatsS}, \cite{KMuS}, \cite{NS}, etc., 
but we must be more careful in some parts. We divide the argument into several 
small steps.\\

\noindent \underline{\bf Step 1:}\ 
First we remember a sort of the commuting vector fields technique 
which we need. We introduce
\begin{align*}
 S := t \pa_{t} + x_{1}\pa_{1} + x_{2}\pa_{2}, \ 
 L_{1} := t \pa_{1} + x_{1} \pa_{t}, \ 
 L_{2} := t \pa_{2} + x_{2} \pa_{t}, \ 
 \Omega := x_1 \pa_2 - x_2 \pa _1,
\end{align*}
and we set
$\Gamma =(\Gamma_{j})_{0 \le j \le 6}
=(S , L_{1} , L_{2} , \Omega , \pa_{0} , \pa_{1} , \pa_{2})$.
For a multi-index 
$\alpha = (\alpha_{0},\alpha_{1},\cdots,\alpha_{6}) \in \Z^{7}_{\ge 0}$, 
we write 
$|\alpha |=\alpha_{0}+\alpha_{1}+\cdots+\alpha_{6}$ 
and 
$\Gamma^{\alpha}
=\Gamma_{0}^{\alpha_{0}}\Gamma_{1}^{\alpha_{1}}\cdots \Gamma_{6}^{\alpha_{6}}$.
We define $|\, \cdot\,|_s$ by 
\begin{align*}
 |\phi (t,x)|_{s}
 =
 \sum _{|\alpha |\le s}|\Gamma^{\alpha }\phi (t,x)|.
\end{align*}
We write $r=|x|$ and $\mathcal{D}=(\pa_r-\pa_t)/2$.
We also set 
$\Lambda_{\infty}:=\{ (t,x) \in [0,\infty)\times \R^2 ; |x|\geq t/2\geq 1 \}$. 
Then we have 
\begin{align}
\left| 
|x|^{1/2}\pa\phi(t,x)-\hat \omega(x) \mathcal{D}\left( |x|^{1/2}\phi(t,x) \right) 
\right|
\le C \jb{t+|x|}^{-1/2}| \phi(t,x) |_1
\label{est_U_to_u}
\end{align}
for $(t,x)\in \Lambda_\infty$, where 
$\hat{\omega}(x)=(-1,x_1/|x|, x_2/|x|)$.  
See Corollary 3.1 in \cite{KMuS} for the proof.\\

\noindent \underline{\bf Step 2:}\ 
Next we make some reductions, whose essential idea goes back to John~\cite{J} 
and H\"ormander~\cite{Hor}. 
Let $u$ be the solution to \eqref{Equ}--\eqref{Data} on 
$[0,\infty) \times \R^2$.
We define $U(t,x)=\mathcal{D}(|x|^{1/2}u(t,x))$. 
We also introduce $H(t,x)$ by
\[
\begin{array}{l}
H
=
-\dfrac{1}{2}
\left( r^{1/2}F(\pa u) - \dfrac{1}{t}P(\omega)U^3 \right) 
- \dfrac{1}{8r^{3/2}}(4\Omega^2+1)u.
\end{array}
\]
From the relation
\[
(\pa_t+\pa_r)(\pa_t-\pa_r)(r^{1/2}\phi)
 =
 r^{1/2}\Box \phi+\frac{1}{4r^{3/2}}(4\Omega^{2}+1)\phi,
\]
it follows that
\begin{align}\label{profile}
 (\pa_t+\pa_r) U(t,x)=-\dfrac{P(\omega)}{2t} U(t,x)^3  + H(t,x).
\end{align}
The basic estimates for $H$ and $u$ can be summarized in the following lemmas. 
\begin{lem} Under that the quadratic null condition and \eqref{finite_propa}, 
we have
\begin{align}
 |H(t,x)|
\le 
C t^{-1/2}\bigl(1+t|\pa u|^2 +|u|_{\sharp,0}\bigr) |u|_{\sharp,0} |u|_1
+C t^{-3/2}|u|_2
\label{est_H}
\end{align}
for 
$(t,x)\in\Lambda_{\infty,R}:=
\{(t,x)\in \Lambda_{\infty}\,;\, |x|\le t+R \}$, 
where $R$ comes from \eqref{cpt_supp}, and 
\begin{align*}
 |u (t,x)|_{\sharp,0}
 =
 |\pa u (t,x)| + \jb{t+|x|}^{-1}|u (t,x)|_{1}.
\end{align*}
\label{remainder}
\end{lem}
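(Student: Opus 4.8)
The plan is to substitute $F=F_{\rm q}+F_{\rm c}+O(|\pa u|^4)$ into the definition of $H$ and to bound, separately on $\Lambda_{\infty,R}$, the four resulting pieces
\[
-\tfrac12 r^{1/2}F_{\rm q}(\pa u),\qquad
-\tfrac12\Bigl(r^{1/2}F_{\rm c}(\pa u)-\tfrac1t P(\omega)U^3\Bigr),\qquad
-\tfrac12 r^{1/2}O(|\pa u|^4),\qquad
-\tfrac{1}{8r^{3/2}}(4\Omega^2+1)u,
\]
matching them to the four kinds of terms on the right-hand side of \eqref{est_H}. Throughout I would use that $t/2\le r\le t+R$ on $\Lambda_{\infty,R}$, so that $r$, $t$ and $\jb{t+r}$ are mutually comparable, together with two consequences of \eqref{est_U_to_u} applied to $\phi=u$: writing $r^{1/2}\pa_j u=\omega_j U+e_j$ (with $\omega_0=-1$), one has $|e_j|\le C\jb{t+r}^{-1/2}|u|_1$, and hence $|U|\le C\bigl(r^{1/2}|\pa u|+\jb{t+r}^{-1/2}|u|_1\bigr)\le Cr^{1/2}|u|_{\sharp,0}$, as well as $|U|^2\le C\bigl(r|\pa u|^2+\jb{t+r}^{-1}|u|_1^2\bigr)$.

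For the quadratic piece I would follow the John--H\"ormander reduction: inserting $r^{1/2}\pa_j u=\omega_j U+e_j$ gives $rF_{\rm q}(\pa u)=F_{\rm q}(\hat\omega)U^2+(\text{terms each carrying some }e_j)$, and the quadratic null condition annihilates the main term, since $F_{\rm q}(\hat\omega)=\sum_{j,k}B_{jk}\omega_j\omega_k=0$. What survives is $O(|U||e|+|e|^2)$, which leads to $r^{1/2}|F_{\rm q}(\pa u)|\le Ct^{-1/2}|u|_{\sharp,0}|u|_1$; this is the contribution of the constant $1$ in \eqref{est_H}.

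The cubic piece is where one must be more careful, and I expect it to be the main obstacle. Expanding as above yields $r^{3/2}F_{\rm c}(\pa u)=P(\omega)U^3+\mathcal E$ with $|\mathcal E|\le C(|U|^2|e|+|U||e|^2+|e|^3)$, whence
\[
r^{1/2}F_{\rm c}(\pa u)-\frac1t P(\omega)U^3
=\frac{t-r}{rt}\,P(\omega)U^3+\frac1r\,\mathcal E .
\]
The remainder $r^{-1}\mathcal E$ is benign: its worst term is $\le Ct^{-1}|U|^2|e|\le Ct^{-1/2}|u|_{\sharp,0}^2|u|_1$. The delicate term is $\frac{t-r}{rt}P(\omega)U^3$, because on $\Lambda_{\infty,R}$ the factor $|t-r|$ need not be small and can be comparable to $t$, so the naive bound $|U|^3\le Cr^{3/2}|u|_{\sharp,0}^3$ is useless here. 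The crucial point is the identity $S-\omega_1 L_1-\omega_2 L_2=-2(t-r)\mathcal D$, which gives $|(t-r)U|=|(t-r)\mathcal D(r^{1/2}u)|\le Cr^{1/2}|u|_1$ (using also $|t-r|\le Cr$ on $\Lambda_{\infty,R}$). Writing $\frac{t-r}{rt}U^3=\frac{1}{rt}\bigl[(t-r)U\bigr]U^2$ and then using the bound for $|U|^2$ above, I would obtain $\bigl|\frac{t-r}{rt}P(\omega)U^3\bigr|\le Ct^{-1/2}|u|_{\sharp,0}^2|u|_1$. Thus the whole cubic piece is absorbed into the $|u|_{\sharp,0}$ contribution in \eqref{est_H}.

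Finally, the quartic remainder satisfies $r^{1/2}|O(|\pa u|^4)|\le Ct^{1/2}|\pa u|^4\le Ct^{-1/2}\,(t|\pa u|^2)\,|u|_{\sharp,0}|u|_1$ by $|\pa u|\le |u|_{\sharp,0}$ and $|\pa u|\le|u|_1$, giving the $t|\pa u|^2$ contribution, while $\frac{1}{8r^{3/2}}|(4\Omega^2+1)u|\le Cr^{-3/2}|u|_2\le Ct^{-3/2}|u|_2$ gives the last term. Summing the four bounds yields \eqref{est_H}. The only genuinely new difficulty compared with the earlier works is the treatment of the $\tfrac1t$--versus--$\tfrac1r$ discrepancy in the cubic piece through the identity for $(t-r)\mathcal D$; everything else is routine.
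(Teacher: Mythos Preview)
Your argument is correct and follows the standard line. The paper itself does not prove this lemma but simply refers to Lemma~2.8 of \cite{KMatsS}; the reduction there proceeds exactly as you describe (the quadratic null structure kills the main term of $r^{1/2}F_{\rm q}$, the cubic part is expanded as $r^{3/2}F_{\rm c}(\pa u)=P(\omega)U^3+\mathcal E$, and the $1/t$ versus $1/r$ discrepancy is absorbed via $S-\omega_1L_1-\omega_2L_2=-2(t-r)\mathcal D$, which gives $|(t-r)U|\le Cr^{1/2}|u|_1$). Your closing comment is slightly misleading: the $(t-r)\mathcal D$ identity is not a new ingredient here but is precisely the device already used in \cite{KMatsS}, \cite{KMuS}.
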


\begin{lem}
Let $0<\mu <1/10$. 
If the quadratic null condition and $({\bf A})$ are satisfied and 
$\eps$ is suitably small, 
then the solution $u$ to \eqref{Equ}--\eqref{Data} satisfies
\begin{align}
|\pa u(t,x)| \le C \eps \jb{t+|x|}^{-1/2}\jb{t-|x|}^{\mu-1}
\label{apriori1}
\end{align}
and
\begin{align}
|u(t,x)|_{2} \le C \eps \jb{t+|x|}^{-1/2+\mu}
\label{apriori2}
\end{align}
for $(t,x) \in [0,\infty)\times \R^2$.
\end{lem}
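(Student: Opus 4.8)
The plan is to run a standard continuity (bootstrap) argument in which \eqref{apriori1} and \eqref{apriori2} are closed simultaneously. Fix $s$ large enough (say $s\ge 6$) to feed the Klainerman--Sobolev inequality, and suppose that on some interval $[0,T]$ the bounds $|\pa u(t,x)|\le A\eps\jb{t+|x|}^{-1/2}\jb{t-|x|}^{\mu-1}$ and $|u(t,x)|_2\le A\eps\jb{t+|x|}^{-1/2+\mu}$ hold with a constant $A$ to be fixed. Since the local solution exists and these bounds hold near $t=0$ with $A$ comparable to the data, it suffices to recover them with $A$ replaced by $A/2$ for $\eps$ small; continuity then propagates them to $[0,\infty)$ and yields the lemma with $C\simeq A/2$.

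The first ingredient is an energy estimate for $E_s(t)=\sum_{|\alpha|\le s}\|\pa\Gamma^\alpha u(t)\|_{L^2}$. I would apply $\Gamma^\alpha$ to \eqref{Equ}, use the fact that $[\Box,\Gamma_j]$ is again of first order in $\Gamma$, and run the standard energy identity. The crucial point is that the quadratic null condition lets one write $\Gamma^\alpha F_{\rm q}(\pa u)$ as a sum of null forms, which enjoy the ``good derivative'' gain $|\pa_{\rm good}\phi|\lesssim \jb{t+|x|}^{-1}|\phi|_1$; this extra factor renders the otherwise borderline $t^{-1}$ quadratic contribution integrable in time. The cubic and quartic terms are controlled crudely by $\|\pa u\|_{L^\infty}^2\lesssim \eps^2/(1+t)$ coming from the bootstrap, so the worst contribution to $\tfrac{d}{dt}E_s^2$ is $\lesssim \eps^2(1+t)^{-1}E_s^2$. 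Gronwall's inequality then gives $E_s(t)\le C\eps(1+t)^{C\eps^2}\le C\eps(1+t)^{\mu}$ once $\eps$ is small enough that $C\eps^2\le \mu$. Combining this with a Klainerman--Sobolev inequality adapted to compactly supported functions (using \eqref{finite_propa}), which bounds $|u(t,x)|_2$ by $\jb{t+|x|}^{-1/2}$ times $E_s(t)$, recovers \eqref{apriori2} with an improved constant.

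For \eqref{apriori1} I would split $[0,\infty)\times\R^2$ into the interior region $\{|x|\le t/2\}$ and the exterior region $\Lambda_\infty$. In the interior, the vector-field identity $\pa_t=\tfrac{t}{t^2-|x|^2}S-\tfrac{1}{t^2-|x|^2}\,x\cdot L$ gives $|\pa u|\lesssim \jb{t-|x|}^{-1}|u|_1$, and inserting the already-improved \eqref{apriori2} yields exactly the claimed bound there (the ratio of the two sides stays bounded for $|x|\le t/2$). In $\Lambda_\infty$ I would use the profile $U=\mathcal{D}(|x|^{1/2}u)$ and integrate the transport equation \eqref{profile} along the outgoing rays, on which $q=t-|x|$ is constant. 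Here the Agemi condition ({\bf A}) is decisive: since $P(\omega)\ge 0$, the term $-\tfrac{P(\omega)}{2t}U^3$ is dissipative, so $\tfrac{d}{dt}|U|\le |H|$ and $|U|$ cannot grow along the ray. Feeding the bootstrap bounds into the estimate \eqref{est_H} of Lemma~\ref{remainder} gives $|H|\lesssim A\eps^2 t^{-3/2+\mu}\jb{q}^{\mu-1}+\eps t^{-2+\mu}$, whose time integral from the entry time $t_{\rm in}\simeq\max\{2,2q\}$ is $\lesssim \eps\jb{q}^{\mu-1}$ (using $\mu<1/2$); matching the initial value of $U$ on the boundary $|x|=t/2$ to the interior estimate closes the bound $|U|\lesssim \eps\jb{t-|x|}^{\mu-1}$. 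Converting back through \eqref{est_U_to_u} then recovers \eqref{apriori1} in $\Lambda_\infty$.

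The main obstacle is the critical, non-integrable $t^{-1}$ decay that pervades the two-dimensional problem, which makes both estimates borderline. Concretely, the delicate parts are the energy estimate for the quadratic null forms, where one must genuinely exploit the good-derivative gain to keep the energy growth as slow as $(1+t)^{C\eps^2}$, and the integration of the profile equation \eqref{profile} near the light cone, where the weights degenerate and the dissipative sign furnished by ({\bf A}) is exactly what prevents John-type growth of $U$. The region-matching between the interior bound and the ray analysis along $\Lambda_\infty$ requires care, but is routine once the separate bounds are in place.
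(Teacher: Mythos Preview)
The paper does not give its own proof of this lemma; it simply refers to Lemma~2.8 and Section~3 of \cite{KMatsS}.  Your bootstrap outline---slow energy growth $E_s(t)\le C\eps(1+t)^{C\eps^2}$ via the quadratic null-form gain and Gronwall, a Klainerman--Sobolev/weighted pointwise step for \eqref{apriori2}, and then the interior/exterior split with integration of the profile ODE \eqref{profile} along outgoing rays, using the dissipative sign $P(\omega)\ge 0$ from $({\bf A})$, for \eqref{apriori1}---is precisely the scheme carried out in \cite{KMatsS}, so your proposal is correct and aligned with the cited argument.  One minor point worth tightening: the step ``Klainerman--Sobolev bounds $|u|_2$ by $\jb{t+|x|}^{-1/2}E_s(t)$'' hides the passage from $\|\pa\Gamma^{\alpha}u\|_{L^2}$ to $\|\Gamma^{\alpha}u\|_{L^2}$; in \cite{KMatsS} this is handled instead via weighted $L^{\infty}$--$L^{\infty}$ estimates for $\Box$, while your route needs a Hardy-type inequality on the cone (available thanks to \eqref{finite_propa}) at that spot.
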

For the proof, see Lemma 2.8 in \cite{KMatsS} 
and Section 3 in \cite{KMatsS}, respectively.
From \eqref{est_U_to_u},  \eqref{est_H}, \eqref{apriori1} and \eqref{apriori2}, we have
\begin{align}
|U(t,x)|
\le 
 \left| |x|^{1/2}\pa u(t,x) \right|
 + 
 \left| |x|^{1/2}\pa u(t,x)-\hat \omega U(t,x) \right|
\le 
C\eps \jb{t-|x|}^{\mu-1}
\label{est_U}
\end{align}
and
\begin{align}
|H(t,x)|
&\le 
C\eps^{2}t^{-1/2}\jb{t+|x|}^{\mu-1}\jb{t-|x|}^{\mu-1} 
+ C\eps t^{-3/2} \jb{t+|x|}^{\mu-1/2} \notag \\
&\le 
C\eps t^{2\mu-3/2}\jb{t-|x|}^{-\mu-1/2}
\label{H}
\end{align} 
for $(t,x) \in \Lambda_{\infty,R}$. 
Now we set $V(t;\sigma,\omega)=U(t,(t+\sigma)\omega)$ and 
$G(t;\sigma,\omega)=H(t,(t+\sigma)\omega)$
for $(t;\sigma,\omega)\in [t_{0,\sigma},\infty)\times \R\times \Sph^1$, 
where $t_{0,\sigma}=\max\{ 2 , -2\sigma \}$. 
Then we see that  \eqref{profile} is reduced to 
\begin{align}
 \pa_t V(t)  =  -\dfrac{P(\omega)}{2t}V(t)^3+ G(t),
 \label{profileV}
\end{align} 
and we observe that 
\begin{align*}
\Lambda_{\infty,R}=\bigcup_{(\sigma,\omega)
\in(-\infty,R]\times\Sph^1} 
\{ \ (t,(t+\sigma)\omega)\ ;\ t\geq t_{0,\sigma} \}.
\end{align*}
We also note that there exists a positive constant 
$c_0$ depending only on $R$ such that
\begin{align}
 \jb{\sigma}/c_0 \le t_{0,\sigma}\le c_0 \jb{\sigma}
\label{t_0}
\end{align}
for $\sigma\in (-\infty,R]$. 
It follows from \eqref{est_U} and \eqref{H} that
\begin{align}\label{V}
|V(t;\sigma,\omega)| 
\le 
C\eps \jb{\sigma}^{\mu-1} 
\end{align}
and
\begin{align}\label{K}
|G(t;\sigma,\omega)| 
\le 
C\eps \jb{\sigma}^{-\mu-1/2} t^{2\mu-3/2} 
\end{align}
for 
$(t,\sigma,\omega)\in [t_{0,\sigma},\infty)\times (-\infty, R]\times \Sph^1$.
\\

\noindent \underline{\bf Step 3:}\ 
Let us also recall the following useful lemma due to Matsumura.
 
\begin{lem}\label{lem_Matsumura}
Let $C_0>0$, $C_1\ge 0$, $p>1$, $q>1$ and $t_0\ge2$. 
Suppose that a function $\Phi(t)$ satisfies 
\begin{align*}
\frac{d\Phi}{dt}(t)
\le -\frac{C_0}{t}\left|\Phi(t)\right|^p + \frac{C_1}{t^{q}} 
\end{align*}
for $t\ge t_0$. Then we have
\begin{align*}
\Phi(t)\le \frac{C_2}{(\log t)^{p^{*}-1}}
\end{align*}
for $t\ge t_0$, where $p^{*}$ is the H\"older conjugate of $p$ (i.e., 
$1/p+1/p^{*}=1$), and
\begin{align*}
C_2=\frac{1}{\log 2}\left( (\log t_0)^{p^{*}}\Phi(t_0) 
  + C_1\int_{2}^{\infty}\frac{(\log \tau)^{p^{*}}}{\tau^{q}}d\tau \right) 
  + \left( \frac{p^{*}}{C_0 p} \right)^{p^{*}-1}.
\end{align*}
\end{lem}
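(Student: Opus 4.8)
The plan is to prove the claimed upper bound by converting the differential inequality into a statement of at most logarithmic growth for a suitably weighted version of $\Phi$. Since the target $\Phi(t)\le C_2(\log t)^{-(p^{*}-1)}$ is the same as $(\log t)^{p^{*}}\Phi(t)\le C_2\log t$, I would set $Z(t)=(\log t)^{p^{*}}\Phi(t)$ and try to show $Z(t)\le C_2\log t$ for $t\ge t_0$. Differentiating by the product rule and substituting the hypothesis on $d\Phi/dt$ gives
\[
Z'(t)\le \frac{p^{*}(\log t)^{p^{*}-1}}{t}\,\Phi(t)-\frac{C_0(\log t)^{p^{*}}}{t}\,|\Phi(t)|^{p}+\frac{C_1(\log t)^{p^{*}}}{t^{q}}.
\]
Since only an upper bound is sought, the dissipative term $-C_0(\log t)^{p^{*}}|\Phi|^{p}/t$ works in our favour, and in the first term I may replace $\Phi$ by $|\Phi|$ (the coefficient $p^{*}(\log t)^{p^{*}-1}/t$ is nonnegative for $t\ge 2$).

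The decisive step, and the place where the exact constant in $C_2$ is produced, is a pointwise estimate of the first two terms. Writing $L=\log t$ and $X=|\Phi(t)|\ge 0$, I would bound
\[
p^{*}L^{p^{*}-1}X-C_0 L^{p^{*}}X^{p}\le \max_{X\ge 0}\Bigl(p^{*}L^{p^{*}-1}X-C_0 L^{p^{*}}X^{p}\Bigr).
\]
A direct one-variable maximization (the maximizer is $X=(p^{*}/(C_0 p L))^{p^{*}-1}$, using $1/(p-1)=p^{*}-1$) shows that this maximum equals $(p^{*}/(C_0 p))^{p^{*}-1}$, with all powers of $L$ cancelling so that the bound is independent of $t$; equivalently this is a weighted Young inequality for the conjugate pair $p,p^{*}$. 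I expect this optimization to be the only genuinely non-routine point: obtaining the sharp constant $(p^{*}/(C_0 p))^{p^{*}-1}$, rather than a cruder value, depends on pairing the linear term against the \emph{full} coefficient $C_0$ of $X^{p}$. This reduces the differential inequality to
\[
Z'(t)\le \frac{1}{t}\Bigl(\frac{p^{*}}{C_0 p}\Bigr)^{p^{*}-1}+\frac{C_1(\log t)^{p^{*}}}{t^{q}}.
\]

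Finally I would integrate from $t_0$ to $t$ and divide by $\log t$. The dissipation constant contributes at most $(p^{*}/(C_0 p))^{p^{*}-1}$, because $\log t-\log t_0\le \log t$. The source term contributes $\frac{C_1}{\log t}\int_{t_0}^{t}(\log\tau)^{p^{*}}\tau^{-q}\,d\tau$, which is controlled using $\log t\ge\log 2$ and the convergence of $\int_{2}^{\infty}(\log\tau)^{p^{*}}\tau^{-q}\,d\tau$ (guaranteed by $q>1$) by $\frac{C_1}{\log 2}\int_{2}^{\infty}(\log\tau)^{p^{*}}\tau^{-q}\,d\tau$. The initial datum contributes $\frac{(\log t_0)^{p^{*}}\Phi(t_0)}{\log t}\le \frac{(\log t_0)^{p^{*}}\Phi(t_0)}{\log 2}$, the inequality being valid since $\log t\ge\log 2$ and $\Phi(t_0)\ge 0$ in the relevant setting. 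Adding the three contributions reproduces exactly the constant $C_2$ of the statement, giving $Z(t)\le C_2\log t$; dividing by $(\log t)^{p^{*}}$ yields $\Phi(t)\le C_2(\log t)^{-(p^{*}-1)}$, as desired.
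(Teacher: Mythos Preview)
The paper does not supply its own proof of this lemma; it simply refers the reader to Lemma~4.1 of \cite{KMatsS}. Your argument is correct and is precisely the standard way such Matsumura-type inequalities are proved: pass to $Z(t)=(\log t)^{p^{*}}\Phi(t)$, absorb the linear term $p^{*}(\log t)^{p^{*}-1}\Phi/t$ into the dissipative term $-C_0(\log t)^{p^{*}}|\Phi|^{p}/t$ via the pointwise maximization (equivalently, the weighted Young inequality for the conjugate pair $(p,p^{*})$), and then integrate. Your computation of the optimal constant $(p^{*}/(C_0 p))^{p^{*}-1}$ is correct, and the cancellation of the powers of $\log t$ in that optimization is indeed the crux of the argument.

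The only point to flag is the one you already identify: the step
\[
\frac{(\log t_0)^{p^{*}}\Phi(t_0)}{\log t}\le \frac{(\log t_0)^{p^{*}}\Phi(t_0)}{\log 2}
\]
requires $\Phi(t_0)\ge 0$. As you note, this is satisfied in the only application made in the paper, where $\Phi(t)=P(\omega)V(t)^{2}\ge 0$, so there is no gap for the purposes of this article.
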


For the proof, see Lemma 4.1 of \cite{KMatsS}. 
\\

\noindent \underline{\bf Final step:}\ 
Now we are in a position to reach \eqref{Decay}. 
Let $(\sigma,\omega)\in (-\infty,R]\times \Sph^1$ be fixed, and we set 
$\Phi(t)=\Phi(t;\sigma,\omega)=P(\omega)V(t;\sigma,\omega)^2$ 
for $t\ge t_{0,\sigma}$. It follows from 
\eqref{profileV}, \eqref{V} and \eqref{K} that 
\begin{align*}
 \pa_t \Phi(t)  
 &=
 2P(\omega)V(t)\pa_t V(t)\\
 &=  
 -\dfrac{P(\omega)^2}{t}V(t)^4+ 2P(\omega)V(t)G(t)\\
 &\le
 -\dfrac{1}{t}\Phi(t)^2+ \frac{C_*\eps^2}{t^{3/2-2\mu} \jb{\sigma}^{3/2}}
\end{align*} 
with some $C_*>0$ not depending on $\sigma$, $\omega$ and $\eps$. 
Therefore we can apply Lemma~\ref{lem_Matsumura} with $p=2$, $q=3/2-2\mu$ and 
$t_0=t_{0,\sigma}$ to obtain 
\[
 0 \le \Phi(t;\sigma,\omega)\le \frac{M(\sigma,\omega)}{\log t},
\]
where 
\[
M(\sigma,\omega)
 =
 \frac{1}{\log 2}\left( 
  (\log t_{0,\sigma})^{2}P(\omega)V(t_{0,\sigma};\sigma,\omega)^2 
  + 
 \frac{C_*\eps^2}{\jb{\sigma}^{3/2}}
 \int_{2}^{\infty}\frac{(\log \tau)^{2}}{\tau^{3/2-2\mu}}d\tau 
 \right) 
 + 1.
\]
By virtue of \eqref{t_0} and \eqref{V}, we see that $M(\sigma,\omega)$ 
can be dominated by a positive constant 
not depending on $\sigma$, $\omega$ and $\eps$. Therefore we deduce that 
\[
 |V(t;\sigma,\omega)|\le \sqrt{\frac{\Phi(t;\sigma,\omega)}{P(\omega)}} 
 \le 
 \frac{C}{\sqrt{P(\omega)\log t}}
\]
for 
$(t,\sigma,\omega)\in [t_{0,\sigma},\infty)\times (-\infty,R]\times \Sph^1$. 
By \eqref{est_U_to_u} and \eqref{apriori2}, we have 
\begin{align*}
 r^{1/2}|\pa u(t,r\omega)|
&\le
\sqrt{2}|V(t;r-t,\omega)|
 + 
 \left| r^{1/2}\pa u(t,r\omega)-\hat \omega U(t,r\omega) \right|\\
&\le 
\frac{C}{\sqrt{P(\omega)\log t}}+ \frac{C\eps}{\jb{t+r}^{1-\mu}}
\end{align*}
for $(t,r\omega)\in\Lambda_{\infty,R}$, whence
\[
 |\pa u(t,r\omega)|
\le 
\frac{C}{\sqrt{rP(\omega)\log t}}  
\left(1 + \frac{\eps\sqrt{P(\omega)\log t}}{t^{1-\mu}}\right)
\le
 \frac{C\eps}{\sqrt{t}}\cdot \frac{1}{\sqrt{P(\omega)\eps^2\log t}}
\]
for $(t,r\omega)\in\Lambda_{\infty,R}$. 
Piecing together this with \eqref{apriori1}, 
we arrive at the desired estimate in the case of 
$(t,r\omega)\in\Lambda_{\infty,R}$. 
It is much easier to derive the bound for $|\pa u(t,r\omega)|$ in the case of 
$(t,r\omega)\not\in\Lambda_{\infty,R}$ (indeed it follows from 
\eqref{apriori1} only), so we skip it here.
\qed

\medskip
\subsection*{Acknowledgments}
The authors would like to thank Professor Soichiro Katayama and 
Dr.Yuji Sagawa for their useful conversations on this subject. 
The work of H.~S. is supported by Grant-in-Aid for Scientific Research (C) 
(No.~17K05322), JSPS. 


\end{document}